\theoremstyle{plain}
  \newtheorem{theorem}{Theorem}
  \newtheorem{lemma}{Lemma}
\theoremstyle{definition} 
    \newtheorem{remark}{Remark}
 \theoremstyle{remark} 
 \newcommand{\E}{\mbox{\rm \hspace*{.2ex}I\hspace{-.5ex}E\hspace*{.2ex}}}
\newcommand{\Vol}{\operatorname{Vol}}
\newcommand{\sm}{{\raise0.3ex\hbox{$\scriptstyle \setminus$}}}
\theoremstyle{plain}
\begin{document}

\begin{frontmatter}
\title{On sequential maxima of exponential sample means, with an application
to ruin probability
 }
\runtitle{Maxima of exponential sample means}

\begin{aug}
\author{\fnms{Dimitris} \snm{Cheliotis}\ead[label=e1]{dcheliotis@math.uoa.gr, npapadat@math.uoa.gr}}
\and
\author{\fnms{Nickos} \snm{Papadatos}\ead[label=e2]{npapadat@math.uoa.gr}}
 \runauthor{D.\ Cheliotis and N.\ Papadatos}

 \affiliation{National and Kapodistrian University of Athens
 }

 \address{National and Kapodistrian University of Athens
 \\
 Department of Mathematics
 \\
 Panepistemiopolis
 \\
 GR-157 84 Athens\\
 Greece\\
 \printead{e1}\\
 \phantom{E-mail:\ dcheliotis@math.uoa.gr, npapadat@math.uoa.gr}
 }
\end{aug}

 \begin{abstract}
 We obtain the distribution of the maximal average in a sequence
 of independent identically distributed exponential random variables.
 Surprisingly enough,
 it turns out that the inverse distribution admits a simple closed form.
 An application to ruin probability in a risk-theoretic model
 is also given.
 \end{abstract}

 \begin{keyword}[class=MSC]
 \kwd[Primary ]{60E05}
 \kwd[; secondary ]{60F05}
 \end{keyword}

 \begin{keyword}
 \kwd{exponential distribution}
 \kwd{maximal average}
 \kwd{Lambert $W$ function}
 \kwd{ruin probability}
 \end{keyword}

 \end{frontmatter}

 \section{Introduction}
 \setcounter{equation}{0}
 \label{sec.1}

 Consider a sequence $(X_i)_{i\ge1}$ of independent identically distributed
 (i.i.d.)\ random variables,
 each having exponential distribution with mean 1. For each $i\in\mathbb{N}^+$
 define the sample mean of the
 first $i$ variables as $\overline{X}_i:=(X_1+X_2+\cdots+X_i)/i$. The
 supremum of this sequence
 $$Z_\infty:=\sup\{\bar{X}_i: i\in \mathbb{N}^+\}$$
 is finite because the sequence converges to 1 with probability 1.

 In this note we compute the distribution function, $F_\infty$, of $Z_\infty$.
 In fact, what has nice form is the inverse of this distribution function.
 Our main result is the following.

 \begin{theorem}
 \label{theo.2.1}
 {\rm (a)}
 $Z_\infty$ has distribution function
 \begin{equation*}
 F_\infty(x)=1-\sum_{k=1}^\infty \frac{k^{k-1}}{k!} x^{k-1}e^{-kx}
 \end{equation*}
 for $x>0$, and density
 which is continuous on $\mathbb{R}\sm\{1\}$, positive on $(1, \infty)$, and zero on $(-\infty, 1)$.

\noindent  {\rm (b)}
 The restriction of $F_\infty$ on $(1, \infty)$ is one to one and onto
 $(0, 1)$ with inverse
 \begin{equation}
 \label{InverseDF}
 F_{\infty}^{-1}(u)=\frac{-\log(1-u)}{u} \ \  \text{ for all } u\in(0, 1).
 \end{equation}
 \end{theorem}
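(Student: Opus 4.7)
My plan is to reduce the problem to a first-passage calculation for an integer-valued random walk via a Poisson-process embedding, from which both the series in part (a) and the inverse in part (b) fall out cleanly.

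Realize $S_i=X_1+\cdots+X_i$ as the $i$-th arrival time of a rate-one Poisson process $(N_t)_{t\ge 0}$, so that $\overline X_i\le x\iff S_i\le ix\iff N_{ix}\ge i$. Setting $Y_j:=N_{jx}-N_{(j-1)x}$ (i.i.d.\ Poisson$(x)$) and $V_i:=\sum_{j=1}^i(Y_j-1)$, the defining event becomes
\[
F_\infty(x)=P(V_i\ge 0 \text{ for every } i\ge 1)=1-P(\tau<\infty),
\]
where $\tau:=\inf\{i\ge 1:V_i=-1\}$ (the downward steps of $V$ have size one, so the walk cannot skip $-1$).

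The key step is to apply the Cycle Lemma (Dvoretzky--Motzkin) to $(Y_j-1)_{j=1}^n$: its entries are $\ge -1$ and, on the event $\{\sum_j Y_j=n-1\}$, they sum to $-1$; since $\gcd(n,n-1)=1$ forbids non-trivial cyclic periods, exactly one of the $n$ cyclic rotations is a first-passage-to-$-1$ configuration. Exchangeability of the $Y_j$ then gives
\[
P(\tau=n)=\frac1n\,P\!\left(\sum_{j=1}^n Y_j=n-1\right)=\frac1n\cdot\frac{(nx)^{n-1}e^{-nx}}{(n-1)!}=\frac{n^{n-1}x^{n-1}e^{-nx}}{n!},
\]
and summing in $n$ produces the claimed series for $F_\infty$. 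For $x\le 1$ the drift $E[V_1]=x-1\le 0$ forces $\tau<\infty$ a.s., so the series equals $1$ and $F_\infty\equiv 0$ on $(-\infty,1]$, consistent with the claimed vanishing of the density on $(-\infty,1)$.

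For part (b), observe that $q(x):=1-F_\infty(x)=P(\tau<\infty)$ is the extinction probability of a Galton--Watson process with Poisson$(x)$ offspring, since $V$ may be read as its depth-first exploration walk with $V_i+1$ equal to the stack size after step $i$. Hence $q$ is the smallest fixed point of $s\mapsto e^{-x(1-s)}$, and setting $u=1-q(x)$ in $q=e^{-x(1-q)}$ yields $1-u=e^{-xu}$, i.e.\ $x=-\log(1-u)/u$, which is (\ref{InverseDF}). Continuity of the density on $\mathbb{R}\sm\{1\}$ and positivity on $(1,\infty)$ then follow from smoothness and strict monotonicity of $u\mapsto -\log(1-u)/u$ on $(0,1)$, or equivalently from uniform convergence of the series on compacts of $(1,\infty)$ (using $xe^{-x}<e^{-1}$ off $x=1$ together with Stirling for $k^{k-1}/k!$). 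The delicate point in the plan is the bookkeeping for the Cycle Lemma; the remainder is essentially algebra once the first-passage representation is in place.
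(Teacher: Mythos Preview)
Your argument is correct and genuinely different from the paper's. The paper proves (a) by a direct recursion: it writes $F_{n+1}(x)=F_n(x)-e^{-(n+1)x}\Vol(K_n(x))$ with $K_n(x)=\{y\in\mathbb{R}^n_+:\ y_1+\cdots+y_i\le ix\ \text{for all }i\}$, and computes $\Vol(K_n(x))=(n+1)^{n-1}x^n/n!$ via an auxiliary two-parameter volume identity. For (b), the paper observes that $F_\infty(x)=0$ on $[0,1)$ forces $\sum_{k\ge1}\frac{k^{k-1}}{k!}(xe^{-x})^k=x$ there, recognizes this series as the inverse of $x\mapsto xe^{-x}$ on $[0,1]$, defines $t(x)\in(0,1]$ by $t e^{-t}=xe^{-x}$, and solves $F_\infty(x)=1-t(x)/x=u$ algebraically. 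Your route replaces both steps with probabilistic structure: the Poisson embedding turns $\{\overline X_i\le x\ \forall i\}$ into a skip-free random-walk event, the Cycle Lemma yields the Borel-type hitting-time law $P(\tau=n)=\frac{n^{n-1}}{n!}x^{n-1}e^{-nx}$ directly, and the Galton--Watson reading of the same walk makes $1-F_\infty(x)$ the Poisson$(x)$ extinction probability, so the fixed-point equation $1-u=e^{-xu}$ drops out immediately. The paper's approach is fully elementary and self-contained (no appeal to the Cycle Lemma or branching theory), while yours is shorter, explains \emph{why} the inverse has such a clean form (it is exactly the Poisson extinction equation), and places the result inside well-known theory---your $P(\tau=n)$ is the Borel distribution and your fixed point is its classical characterization. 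One minor remark: you don't actually need the aperiodicity observation, since a sequence of integers $\ge -1$ summing to $-1$ can never have a nontrivial cyclic period; the standard Cycle Lemma applies without it.
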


 \begin{remark}

 \noindent (a) For $F_\infty$ we have the alternative expression
 $$F_\infty(x)=1+\frac{1}{x} W_0(-xe^{-x})$$
 where $W_0$ is the principal
 branch of the Lambert $W$ function, that is, the inverse function
 of $x\mapsto x e^{x}, x\ge1$; see \cite{CGHJK}. Indeed, the power series $\sum_{k=1}^\infty \frac{k^{k-1}}{k!} y^k$ has interval of convergence $[-1/e, 1/e]$ and equals $-W_0(-y)$.

 (b) Clearly, the results of the theorem extend immediately to the case
 that the $X_i$'s are i.i.d. and $X_1=a Y+b$
 with $a>0$, $b\in \mathbb{R}$ and $Y\sim {\mbox Exp}(1)$. However, we were not able to
 find an explicit formula for the distribution of $Z_\infty$ for any other
 distribution of the  $X_i$'s.

 (c) Although it is intuitively
 clear that $F_{\infty}(x)>0$ for $x>1$, it is not entirely
 obvious how to verify it by direct calculations.
 However, this fact is evident from Theorem \ref{theo.2.1}.

 (d) Formula
 \eqref{InverseDF} enables the explicit calculation of the percentiles
 of $F_{\infty}$. Therefore, the
 result is useful for the following kind of problems:
 Suppose that a quality control machine calculates subsequent averages,
 and alarms if
 some average $\bar{X}_n$ is greater than $c$,
 where $c$ is a predetermined constant
 such that the probability of false alarm is small, say
 $\alpha$. For $\alpha\in(0,1)$, the upper percentage
 point of $F_{\infty}$ (that is,  the point $c_\alpha$
 with $F_{\infty}(c_\alpha)=1-\alpha$)
 is given by $c_{\alpha}=\frac{-\log\alpha}{1-\alpha}$,
 and thus the proper value of $c$ is $c=c_{\alpha}$.
 \end{remark}

 If in the definition of $Z_\infty$ we discard the first $n-1$ values
 of  $\bar{X_i}$, we obtain the random variable
 \begin{equation*}
 M_n:=\sup\{\bar{X}_i: i\ge n\}
 \end{equation*}
 for which, however, (for $n\ge2$) the distribution function
 is quite complicated even for the exponential case.
 For instance, the distribution of $M_2$ is given by
 (we omit the details)
 \begin{equation*}
 F_{M_2}(x)= F_\infty(x)+e^{-2x}\frac{F_\infty(x)}{1-F_\infty(x)},
  \ \ \
  x\geq 0.
 \end{equation*}
 What we can compute is the asymptotic distribution of $\sqrt{n}(M_n-1)$ as
 $n\to\infty$. This distribution is the same for a large class of
 distributions of the $X_i$'s, as the following theorem shows.

 \begin{theorem}
 \label{WeakConvThm}
 Assume that the $(X_i)_{i\ge1}$ are i.i.d.\ with mean 0, variance 1,
 and there is $p>2$ with $\E|X_1|^p<\infty$. Let
 $M_n:=\sup\{\bar{X}_i: i\ge n\}$ for all $n\in\mathbb{N}^+$.
 Then,
 \begin{equation*}
 \sqrt{n}M_n\Rightarrow |Z|
 \end{equation*}
 where $Z\sim N(0, 1)$ is a standard normal random variable.
 \end{theorem}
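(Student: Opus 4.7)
The plan is to apply Donsker's invariance principle to convert the problem into a statement about Brownian motion, and then to identify the resulting functional of $B$ via a time reversal of the Brownian path.

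To begin, set $S_k:=X_1+\cdots+X_k$ and $T_n(t):=S_{\lfloor nt\rfloor}/\sqrt{n}$. By Donsker's theorem, valid under $\E X_1^2<\infty$, $T_n\Rightarrow B$ in $D[0,T]$ for every fixed $T>0$, where $B$ is a standard Brownian motion. For an integer $i=\lfloor nt\rfloor\in[n,nT]$ one has $\sqrt{n}\,\bar{X}_i=(n/i)\,T_n(i/n)$ with $n/i=1/t+O(1/n)$ uniformly for $t\in[1,T]$. Since the functional $f\mapsto\sup_{1\le t\le T}f(t)/t$ is continuous on $D[0,T]$ at functions continuous on $[1,T]$, the continuous mapping theorem will give
$$\sup_{n\le i\le nT}\sqrt{n}\,\bar{X}_i\;\Longrightarrow\;\sup_{1\le t\le T}\frac{B(t)}{t}.$$
To identify this supremum in the limit $T\to\infty$, I would use the time reversal $W(s):=sB(1/s)$ for $s>0$ (with $W(0):=0$), which is itself a standard Brownian motion; then $B(t)/t=W(1/t)$ and hence
$$\sup_{t\ge 1}\frac{B(t)}{t}=\sup_{s\in(0,1]}W(s)\;\stackrel{d}{=}\;|W(1)|=|Z|$$
by the reflection principle.

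The main obstacle will be the tail over $i>nT$: one has to show that $\sup_{i>nT}\sqrt{n}\,|\bar{X}_i|$ tends to zero in probability, uniformly in $n$, as $T\to\infty$, and this is precisely the step that uses the hypothesis $\E|X_1|^p<\infty$ with $p>2$. The plan is to decompose the tail into dyadic blocks $[2^k nT,\,2^{k+1}nT)$ and, on each, invoke Doob's maximal inequality together with Rosenthal's inequality to bound $\E\max_{j\le N}|S_j|^p\le C_p N^{p/2}$. Markov's inequality on the $k$-th block then yields a probability bound of order $2^{-kp/2}/((nT)^{p/2}\delta^p)$; summing the geometric series in $k$ and then substituting $\delta=\varepsilon/\sqrt n$ collapses the $n$-dependence and leaves a bound of order $T^{-p/2}\varepsilon^{-p}$, which vanishes as $T\to\infty$.

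A standard truncation argument then closes the proof: bracketing $\sqrt{n}M_n$ between $\sup_{n\le i\le nT}\sqrt{n}\,\bar{X}_i$ and that same quantity plus the tail, and using both the convergence on $[1,T]$ and the almost-sure increase of $\sup_{1\le t\le T}B(t)/t$ to $|Z|$ as $T\to\infty$ together with the uniform tail bound above, gives $\sqrt{n}M_n\Rightarrow|Z|$.
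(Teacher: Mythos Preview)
Your argument is correct, but it proceeds along a different route from the paper. The paper invokes a strong approximation theorem (Cs\"org\H{o}--R\'ev\'esz) to couple the partial sums almost surely with a Brownian motion at rate $o(n^{1/p})$, so that $\sqrt{n}\,\bigl(M_n-\sup_{k\ge n} W_k/k\bigr)\to0$ a.s.; a short estimate then replaces the discrete supremum by $\sup_{s\ge n}W_s/s$, and scaling plus time inversion identify the limit as $|W_1|$, exactly as you do. In the paper the moment hypothesis $\E|X_1|^p<\infty$ enters only through the black-box strong approximation, whereas you obtain the finite-horizon limit from Donsker (needing only finite variance) and use the $p$-th moment explicitly, via Doob--Rosenthal on dyadic blocks, to control the tail $\sup_{i>nT}\sqrt{n}\,|\bar X_i|$ uniformly in $n$. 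Both approaches share the same endgame---time inversion $W(s)=sB(1/s)$ and the reflection identity $\sup_{s\le 1}W(s)\stackrel{d}{=}|W(1)|$. The paper's proof is shorter because it outsources the hard work to a deep coupling theorem and yields an almost-sure (not merely distributional) approximation along the way; your proof is more self-contained, relying only on the invariance principle and elementary moment inequalities, and it makes transparent exactly where and why $p>2$ is needed.
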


 It is easy to see
 that under the assumptions of Theorem \ref{WeakConvThm},
 by the law of the iterated logarithm, it holds
 $$
 \limsup_{n\to\infty} \frac{\sqrt{n}}{\sqrt{2\log\log n}}M_n=1.
 $$

 \section{Proofs}
 \label{sec.2}

 \begin{proof}[Proof of Theorem \ref{theo.2.1}]
 (a) For each $n\in\mathbb{N}^+$ consider the random variable
 \[
 Z_n:=\max\left\{\bar{X}_1,\bar{X}_2,\ldots,\bar{X}_n\right\}
 \]
 and call $F_n$ its distribution function. The sequence $(Z_n)_{n\ge1}$ is
 increasing and $Z_\infty=\lim_{n\to\infty} Z_n$,
 $F_\infty(x)=\lim_{n\to\infty} F_n(x)$.
 We will compute $F_n$ recursively.

 For $n\in \mathbb{N}^+$ and $x\geq 0$ we have
 {\small
 \begin{equation*}
 \begin{aligned}
 F_{n+1}(x) &=
 \Pr[X_1\leq x, X_1+X_2\leq 2x, \ldots,
 X_1+X_2+\cdots+X_{n+1}\leq (n+1) x]
 \\ & =
 \int_0^{x}\int_{0}^{2x-y_1}
 \cdots\int_{0}^{(n+1)x-(y_1+y_2+\cdots+y_n)}
 e^{-(y_1+y_2+\cdots+y_{n+1})}
 d {\mathbf{y}}_{n+1}
 \\
 &=\int_0^{x}\int_{0}^{2x-y_1}
 \cdots\int_{0}^{nx-(y_1+y_2+\cdots+y_{n-1})}
 \Big\{e^{-(y_1+y_2+\cdots+y_n)}-e^{-(n+1)x}\Big\}
 d {\mathbf{y}}_n
 \\&=F_n(x)-e^{-(n+1)x} \Vol(K_n(x))
 \end{aligned}
 \end{equation*}
 }
 where $d {\mathbf{y}}_k=dy_k \cdots d y_2 d y_1$ and
 \[K_n(x):=
 \{(y_1,y_2,\ldots,y_n) \in \mathbb{R}^n_+
 : \
 0\leq y_1+\cdots+y_i \leq ix, \ \
 i=1,2,\ldots,n\}.
 \]

 Note that $F_1(x)=1-e^{-x}$
 and introduce the convention $\Vol(K_0(x))=1$.
 It follows that
 $F_n(x)=1-\sum_{k=1}^n \Vol(K_{k-1}(x)) e^{-kx}$
 and  from Lemma \ref{VolumeLemma}, below,
 we get the explicit form
 \begin{equation*}
 F_n(x)=1-\sum_{k=1}^n \frac{k^{k-1}}{k!} x^{k-1}e^{-kx}, \text{ for all }
 \ x\geq 0, \ n\in \mathbb{N}^+.
 \end{equation*}
 This implies the first formula for $F_\infty$. By the law of large numbers,
 we get that $F_\infty(x)=0$ for all $x\in(-\infty, 1)$, and thus, the
 derivative of $F_\infty$ in $\mathbb{R}\sm\{1\}$ is
 \begin{equation*}
 f_\infty(x):=\textbf{1}_{x>1}\sum_{k=1}^\infty
 \frac{k^{k-1}}{k!}\left(k-\frac{k-1}{x}\right) x^{k-1} e^{-kx}.
 \end{equation*}

 (b) First we rewrite $F_\infty$ in a more convenient form.
 The fact that $F_\infty(x)=0$ for $x\in[0, 1)$ implies the
 remarkable identity (see Fig.\ 1)
 \begin{equation}
 \label{RemarkableEquality}
 \sum_{k=1}^{\infty} \frac{k^{k-1}}{k!} x^{k-1} e^{-kx} = 1 \ \
 \text{ for all } x\in[0,1).
 \end{equation}
 \begin{figure}[htp]
\vspace*{-7em}
\label{fig.1}
    \centering
     \includegraphics[width=12cm]
{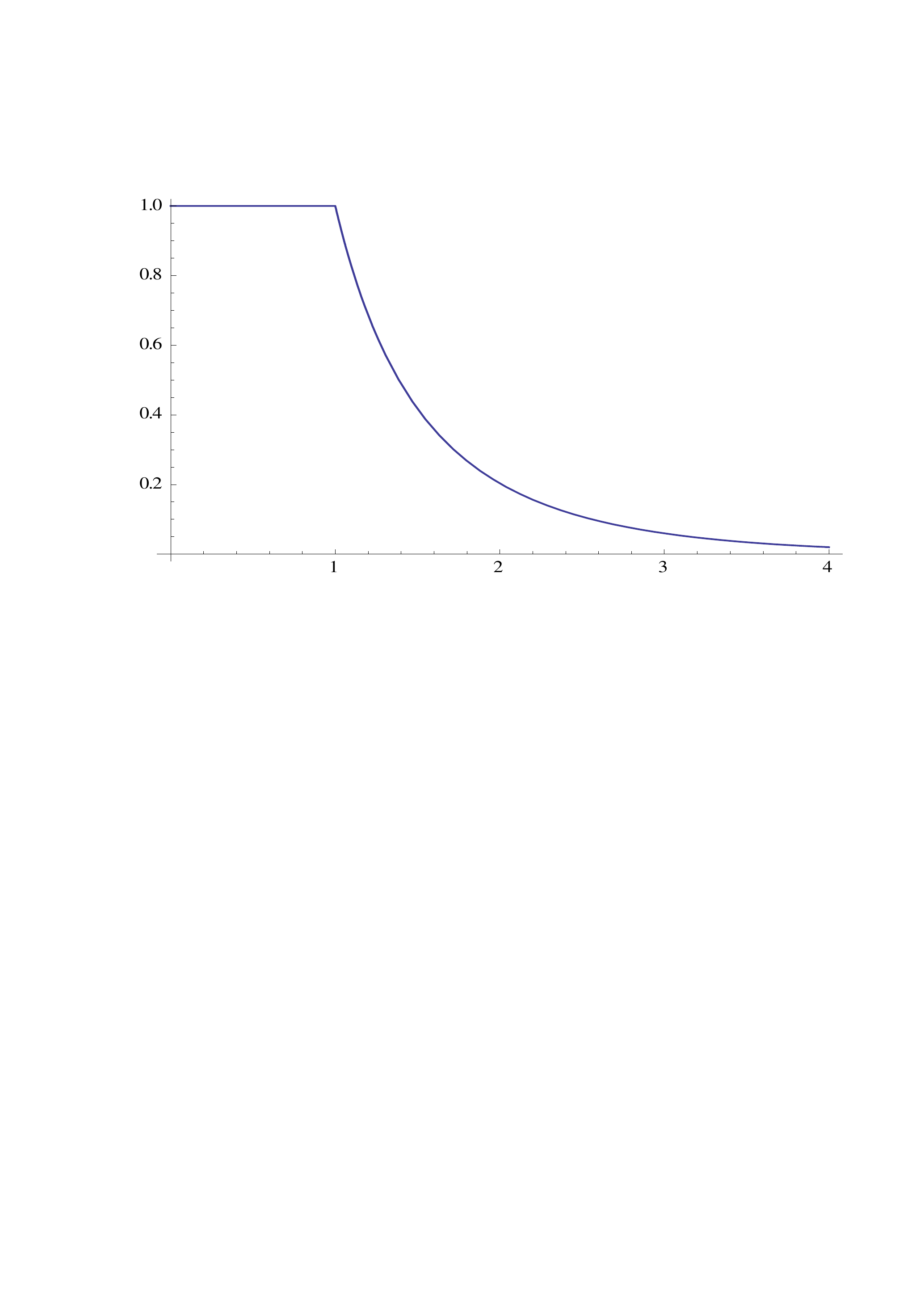}
\vspace*{-31em}

  \caption{The series \eqref{RemarkableEquality} in the interval $0\leq x\leq 4$.}
 \end{figure}
 \vspace*{-3ex}

 \noindent
 Our aim is to compute the value of the series in the left hand side also for $x\ge1$.
 The series converges uniformly for $x\in[0, \infty)$ because
 $$
 \sup_{x\ge0}  \frac{k^{k-1}}{k!} x^{k-1} e^{-kx}= \frac{(k-1)^{k-1}}{k!} e^{-(k-1)}
 \sim \frac{1}{k^{3/2} \sqrt{2\pi}},
 $$
 which is summable in $k$.
 Thus, by continuity,
 \eqref{RemarkableEquality} holds also for $x=1$. Now we rewrite \eqref{RemarkableEquality}
 in the form
 \begin{equation} \label{InverseFunction}
 \sum_{k=1}^{\infty}\frac{k^{k-1}}{k!} (x e^{-x})^k=x \text { for all } x\in[0, 1].
 \end{equation}
 The power series
 $h(y):=\sum_{k=1}^{\infty}\frac{k^{k-1}}{k!} y^k$ is strictly increasing in
 $[0, e^{-1}]$ and thus \eqref{InverseFunction} says that $h$ is the inverse
 function of the restriction, $g_r$, on $[0, 1]$ of the function
 $g:[0, \infty)\to[0, e^{-1}]$ with $g(x)=xe^{-x}$.
 The function $g$ is continuous, strictly increasing in $[0, 1]$, and
 strictly decreasing in $[1, \infty)$
 with $g(0)=0, g(1)=e^{-1}, g(\infty)=0$. Thus, for each $x\in[1,\infty)$,
 there exists a unique
 $t=t(x)\in(0,1]$ such that $g_r(t)=xe^{-x}$, i.e.,
 $te^{-t}=xe^{-x}$; hence, we define
 \begin{equation}
 \label{t}
 t(x):=g_r^{-1}(xe^{-x})=h(x e^{-x}),  \ \ x\geq 0.
 \end{equation}
 Since $t(x)=x$ for $x\in[0,1]$, we have
 \begin{equation}
 \label{2.7}
 F_{\infty}(x)=
 \left\{
 \begin{array}{lll}
 0, & \mbox{if} & x\leq 1,
 \\
 1-\frac{t(x)}{x}, & \mbox{if} & x\geq 1.
 \end{array}
 \right.
 \end{equation}

 Now for any fixed $u\in(0,1)$, the relation $F_\infty(x)=u$ gives
 $x-t(x)=xu$ so that $t(x)=(1-u)x$. Consequently,
 \begin{equation*}
 e^{xu}=\frac{e^{-t(x)}}{e^{-x}}=\frac{x}{t(x)}=\frac{1}{1-u}.
 \end{equation*}
 Thus, $x=-\log(1-u)/u$ and the proof is complete.
 \end{proof}
 \begin{remark}
 From the well-known relation $\E Z_n^\alpha =\alpha
 \int_{0}^{\infty}x^{\alpha-1}(1-F_n(x))dx$ for
 $\alpha>0$,  we obtain a simple expression
 for the moments:
 \begin{equation*}
 \E Z_n^\alpha =\alpha\sum_{k=1}^n
 \frac{\Gamma(\alpha+k-1)}{k^\alpha
 k!}.
 \end{equation*}
 In particular,
 \[
 \mbox{
 $\E Z_n=\sum_{k=1}^n \frac{1}{k^2}$, \ \
 $\E Z_n^2 =2\sum_{k=1}^n \frac{1}{k^2}$, \ \
 $\E Z_n^3= 3\sum_{k=1}^n \frac{1}{k^2}+3\sum_{k=1}^n \frac{1}{k^3}$.
 }
 \]
 Since $Z_n \nearrow Z_\infty$ with probability one, the above relations
 combined with the monotone convergence theorem give the moments of $Z_\infty$
 and in particular that it has mean
 $\frac{\pi^2}{6}$ and variance $\frac{\pi^2}{6}(2-\frac{\pi^2}{6})$.
 \end{remark}

 The next lemma is a special case of Theorem 1 in \cite{PS02} (see relation (7) in that paper),
 however, to keep the exposition self-contained, we provide a proof.

 \begin{lemma}
 \label{VolumeLemma}
 For $x\geq 0$, $x+t\geq 0$, and $n\in\mathbb{N}^+$, define
 \[
 K_n(x, t):=
 \{ (y_1,y_2,\ldots,y_n) \in \mathbb{R}^n_+
 : y_1+\cdots+y_i \leq ix+t \ \text{ for all }
 i=1,2,\ldots,n\}.
 \]
 Then,
 \begin{equation}
 \label{2.1}
 V_n(x, t):=\Vol(K_n(x, t))=\frac{1}{n!}(x+t)((n+1)x+t)^{n-1}, \ \
 n=1,2,\ldots,
 \end{equation}
 and, in particular, setting $t=0$,
 $\Vol(K_n(x))=\frac{1}{n!}(n+1)^{n-1} x^n$.
 \end{lemma}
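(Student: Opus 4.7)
My plan is to prove the formula by induction on $n$, with the crucial observation that slicing $K_n(x,t)$ by its first coordinate reduces the problem to a region of exactly the same shape in one fewer dimension, but with a shifted parameter. The presence of the auxiliary parameter $t$ in the statement (instead of only the special case $t=0$ that is actually needed for Theorem \ref{theo.2.1}) is precisely what allows the recursion to close on itself, and I would flag this as the main conceptual content of the lemma.

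The base case $n=1$ is immediate, since $K_1(x,t)=[0,x+t]$ gives $V_1(x,t)=x+t$, matching the formula. For the inductive step, fix $y_1\in[0,x+t]$ and relabel $u_j:=y_{j+1}$ for $j=1,\ldots,n-1$. The original constraints $y_1+\cdots+y_{j+1}\le(j+1)x+t$ become $u_1+\cdots+u_j\le jx+(x+t-y_1)$, which means the $(n-1)$-dimensional slice is congruent to $K_{n-1}(x,\,x+t-y_1)$. Substituting $s=x+t-y_1$ to flip the direction of integration yields the clean recursion
\[
V_n(x,t)=\int_0^{x+t}V_{n-1}(x,s)\,ds.
\]

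Plugging in the inductive hypothesis and then substituting $u=nx+s$ turns the right-hand side into an integral of $\bigl(u-(n-1)x\bigr)u^{n-2}$ over $[nx,(n+1)x+t]$, whose antiderivative is $\tfrac{u^n}{n}-xu^{n-1}$. At the lower endpoint the two terms cancel identically (since $(nx)^n/n=x(nx)^{n-1}$), and at the upper endpoint they combine to $\tfrac{x+t}{n}\bigl((n+1)x+t\bigr)^{n-1}$; dividing by $(n-1)!$ produces exactly $\tfrac{1}{n!}(x+t)((n+1)x+t)^{n-1}$, completing the induction. The only nontrivial step is spotting the right recursion—once one realizes that peeling off $y_1$ does not leave a region of the special form $K_{n-1}(x,0)$ but rather a shifted version $K_{n-1}(x,x+t-y_1)$, everything else reduces to a short polynomial integral in which the lower limit conveniently vanishes.
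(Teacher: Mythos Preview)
Your proof is correct and follows exactly the paper's approach: the paper derives the same recursion $V_{n+1}(x,t)=\int_0^{x+t}V_n(x,x+t-y_1)\,dy_1$ by slicing on the first coordinate and then simply asserts that the claim follows by induction. You have merely filled in the integration that the paper omits, and your computation is right.
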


 \begin{proof}
 Clearly $V_1(x,t)=x+t$ and for $n\geq 1$
 \begin{equation}
 \begin{aligned} \label{RecFormula}
 V_{n+1}(x, t)&=\int_0^{x+t}\int_{0}^{2x+t-y_1}
 \cdots\int_{0}^{(n+1)x+t-(y_1+y_2+\cdots+y_{n})}
 d\mathbf{y}_{n+1}
 \\
 &=\int_0^{x+t}\int_{0}^{x+(x+t-y_1)}
 \cdots\int_{0}^{nx+(x+t-y_1)-(y_2+\cdots+y_{n})}
 d\mathbf{y}_{n+1}
 \\
 &=\int_0^{x+t} V_n(x,x+t-y_1) dy_1.
 \end{aligned}
 \end{equation}
 The claim
 follows by induction on $n$.
 \end{proof}

 It is consistent with the recursion \eqref{RecFormula} for
 $V_n$ and \eqref{2.1} to define $V_{0}(x,t):=1$ so that
 \eqref{2.1} holds for all $n\in \mathbb{N}^+\cup\{0\}$.
 This agrees
 with the convention $\Vol(K_0(x))=1$ we made in the proof of
 Theorem \ref{theo.2.1}(a).


 \begin{proof}[Proof of Theorem \ref{WeakConvThm}]
 By Theorem 2.2.4 in \cite{CR81} we
 may assume that we can place $(X_i)_{i\ge1}$ in the same probability space with
 a standard Brownian motion $(W_s)_{s\ge0}$, so that, with probability 1,
 we have
 $|n \bar{X}_n-W_n|/n^{1/p}(\log n)^{1/2}\to0$ as $n\to\infty$. This implies that
 \begin{equation*}
 \lim_{n\to\infty}\sqrt{n}
 \left(M_n-\sup_{k\in \mathbb{N}, k\ge n} \frac{W_k}{k}\right)=0
 \end{equation*}
 with probability 1. On the other hand, with probability one, we have for
 all large $n$ the bound $\sup_{s\in[n, n+1]}|W_s-W_n|\le 2\sqrt{\log n}$, thus
 \begin{equation*}
 \lim_{n\to\infty}\sqrt{n}
 \left(\sup_{k\in \mathbb{N}, k\ge n} \frac{W_k}{k}-\sup_{s\ge n}
 \frac{W_s}{s}\right)=0.
 \end{equation*}
 Finally, by scaling and time inversion, we conclude that
 \begin{equation*}
 \sqrt{n}\sup_{s\ge n} \frac{W_s}{s}\overset{d}{=}\sup_{s\ge 1}
 \frac{W_s}{s}\overset{d}{=}\sup_{s\in[0, 1]} W_s \overset{d}{=} |W_1|,
 \end{equation*}
 and the proof is complete.
 \end{proof}

 \section{An application to ruin probability}
Following the same steps as in the proof of Theorem \ref{theo.2.1}(b), one can
 evaluate the distribution function, $F_{n;\lambda}$, of the random variable
 \[
 Z_{n;\lambda}:=
 \max\left\{\frac {X_1}{1+\lambda},\frac{X_1+X_2}{2+\lambda},
 \ldots,\frac{X_1+X_2+\cdots+X_n}{n+\lambda}\right\}
 \]
 for all $\lambda> -1$ and $n\in\mathbb{N}^+$.
 Indeed, using  \eqref{2.1} and induction on $n$
 it is easily verified that for all $x\ge0$ we have
\begin{equation*}
  F_{n;\lambda}(x)=1-(1+\lambda)e^{-\lambda x}\sum_{k=1}^n
  \frac{k (k+\lambda)^{k-2}}{k!} x^{k-1}e^{-kx}.
 \end{equation*}
 Thus, the distribution function of
 $Z_{\infty, \lambda}:=\lim_{n\to\infty} Z_{n; \lambda}$ equals
 \begin{align}
  F_{\infty;\lambda}(x)&=1-(1+\lambda)e^{-\lambda x}\sum_{k=1}^\infty
  \frac{k (k+\lambda)^{k-2}}{k!} x^{k-1}e^{-kx}
  \label{s1}
  \\&=1-\frac{t(x)}{x} e^{\lambda(t(x)-x)},
  \label{f2}
 \end{align}
 where the function $t$ is defined by \eqref{t}.
 To justify the equality \eqref{f2}, we use the same arguments that lead from
 \eqref{RemarkableEquality} to \eqref{2.7}. Similarly as in Theorem \ref{theo.2.1}(b),
 we find that  $F_{\infty; \lambda}$ is zero in
 $(-\infty, 1]$, strictly increasing in $[1, \infty)$ with range $[0, 1)$, and
 its distribution inverse
 is given by
 \begin{equation}
 \label{useful}
 F_{\infty; \lambda}^{-1}(u)
 =\frac{-\log(1-u)}{1-(1-u)^\frac{1}{1+\lambda}}\times \frac{1}{\lambda+1},
 \ \ 0<u<1.
 \end{equation}

 \begin{remark}
 By the law of large numbers, the series in the right hand side of \eqref{s1}
 equals to one for all $x\in[0,1]$. Therefore, setting $x=\alpha$, $1+\lambda=\theta$
 and $k\to k+1$, the function
 \begin{equation*}
 p(k;\alpha,\theta)=\theta e^{-\alpha (\theta+k)}
 \frac{\alpha^{k}(k+\theta)^{k-1}}{k!}
 \end{equation*}
 defines a probability mass
 function supported on $\mathbb{N}^+\cup\{0\}$,
 known (after a suitable re-parametrization) as {\it
 generalized Poisson distribution}
 with parameter $(\alpha,\theta)\in[0,1]\times (0,\infty)$;
 see \cite{Char}
 and references therein.
 \end{remark}

 Consider now the following risk model. Assume that the aggregate claim at time $n$
 is described by $S_n:=X_1+\cdots+X_n$, where the $(X_i)_{i\ge1}$ are i.i.d. with $\E X_1=1$, the premium rate (per time unit) is
 $c=1+\theta>0$ ($\theta$ is the safety loading of the insurance), and the
 initial capital is $u>-(1+\theta)$, where negative initial capital
 is allowed for technical reasons.
 The risk process is defined by
 \begin{equation*}
 U_n=u + c n-S_n, \ \ n\in\mathbb{N}^+.
 \end{equation*}
 Clearly, the ruin probability
 \begin{equation}
 \label{ruin}
 \psi(u):=\Pr (U_n<0 \ \mbox{ for some } \  n\in\mathbb{N}^+)
 \end{equation}
 is of fundamental importance. Our explicit formulae are useful in computing
 the minimum initial capital needed to ensure that $\psi(u)$ is small.
 The particular problem (for general claims) has been studied in \cite{SSK},
 under the name {\it discrete-time surplus-process model}.
 It is well-known that  $\psi(u)=1$ when $c\leq 1$, no matter
 how large $u$ is, because $\E X_i=1$. Hence, the problem
 is meaningful only for $c>1$, i.e., $\theta>0$.

 \begin{theorem}
 \label{theo.3}
 Assume that the i.i.d.\ individual claims $(X_i)_{i\geq 1}$
 are exponential random variables with mean 1,
 fix $\alpha\in(0,1)$ and $\theta>0$,
 and set $c=1+\theta$.
 Then,

 \noindent
 {\rm (a)} the ruin probability \eqref{ruin}
 is given by
 \begin{equation}
 \label{psi}
 \psi(u)=\begin{cases} \frac{t(c)}{c}\exp\left(-u\left(1-\frac{t(c)}{c}\right)\right), &\text{ if }
 u>-c,\\
 1& \text{ if } u\le -c,
 \end{cases}
 \end{equation}
 where the function $t$ is given by \eqref{t};

 \noindent
 {\rm (b)} the minimum initial capital $u=u(\alpha,\theta)$ needed to ensure that
 $\psi(u)\leq \alpha$
 is given by the unique root of the equation
 \begin{equation}
 \label{u}
 (1+\theta+u)
 \left(1-\alpha^{\frac{1+\theta}{1+\theta+u}}\right)
 =-\log\alpha, \ \  u>-(1+\theta).
 \end{equation}

 \end{theorem}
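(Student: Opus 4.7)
The plan is to reduce both parts of Theorem~\ref{theo.3} to the formula \eqref{f2} for $F_{\infty;\lambda}$ and to its inverse \eqref{useful}, both already established in this note.

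For part (a), I would first handle $u>-c$ by rewriting the ruin event. Since $U_n<0$ is equivalent to $S_n>c(n+u/c)$, and $n+u/c>0$ for every $n\ge 1$ (because $u>-c$), we obtain
\begin{equation*}
\{U_n<0\ \text{for some}\ n\in\mathbb{N}^+\}
=\Big\{\max_{n\ge 1}\frac{S_n}{n+u/c}>c\Big\}
=\{Z_{\infty;u/c}>c\}.
\end{equation*}
Hence $\psi(u)=1-F_{\infty;u/c}(c)$, and substituting $\lambda=u/c$, $x=c$ into \eqref{f2} yields
\begin{equation*}
\psi(u)=\frac{t(c)}{c}\,e^{(u/c)(t(c)-c)}
=\frac{t(c)}{c}\exp\!\Big(-u\Big(1-\frac{t(c)}{c}\Big)\Big),
\end{equation*}
which is the first line of \eqref{psi}. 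For $u\le -c$, the random variable $U_1=u+c-X_1$ is negative a.s.\ (since $u+c\le 0$ while $X_1>0$ a.s.), so ruin occurs already at time $1$ and $\psi(u)=1$.

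For part (b), the explicit form from (a) shows that $\psi\colon(-c,\infty)\to(0,1)$ is a continuous, strictly decreasing bijection. Indeed, $1-t(c)/c>0$ because $c=1+\theta>1$ forces $t(c)<c$, and the identity $c-t(c)=-\log(t(c)/c)$ (which follows from $t(c)e^{-t(c)}=ce^{-c}$) gives $\psi(-c^+)=\frac{t(c)}{c}e^{-\log(t(c)/c)}=1$, while $\psi(u)\to 0$ as $u\to\infty$. Consequently the equation $\psi(u)=\alpha$ has a unique root $u^*\in(-c,\infty)$, which by monotonicity is the minimum initial capital with $\psi\le\alpha$. To rewrite $u^*$ in the implicit form \eqref{u}, I would not invert the exponential directly but instead translate $\psi(u^*)=\alpha$ into $F_{\infty;u^*/c}(c)=1-\alpha$, i.e., $c=F_{\infty;u^*/c}^{-1}(1-\alpha)$, and apply \eqref{useful} with $\lambda=u^*/c$; clearing the factor $1/(1+u^*/c)$ and substituting $c=1+\theta$ recovers \eqref{u} verbatim.

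The main obstacle is not any technical difficulty but rather the careful bookkeeping: identifying the ruin event with $\{Z_{\infty;u/c}>c\}$ (which hinges on the positivity $n+u/c>0$), separating the two regimes $u>-c$ and $u\le -c$ in (a), and invoking the bijectivity of $F_{\infty;\lambda}$ on $[1,\infty)$ to justify passing from $\psi(u)=\alpha$ to the inverse formula. Once \eqref{f2} and \eqref{useful} are in hand, the remaining computations are purely algebraic.
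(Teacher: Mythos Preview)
Your proposal is correct and follows essentially the same route as the paper: both identify $\psi(u)=1-F_{\infty;u/c}(c)$ for $u>-c$, read off \eqref{psi} from \eqref{f2}, and then pass through \eqref{useful} with $\lambda=u/c$ to obtain \eqref{u}. The only cosmetic differences are that you spell out the identification of the ruin event with $\{Z_{\infty;u/c}>c\}$ (the paper leaves this implicit) and that for $u\le -c$ you argue directly via $U_1<0$ a.s., whereas the paper obtains $\psi(u)=1$ from the limit $\psi(-c^+)=1$ together with monotonicity.
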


 \begin{proof}
 (a) For $u>-c$, we can use \eqref{f2} to get
$$\psi(u)=1-F_{\infty; u/c}(c)=\frac{t(c)}{c} e^{(u/c)(t(c)-c)},$$
which is \eqref{psi}. Then, the definition of $t$ shows that $\lim_{u\to-c^+}\psi(u)=\frac{t(c)e^{-t(c)}}{ce^{-c}}=1$, and the monotonicity of $\psi$ implies that $\psi(u)=1$ for $u\le -c$.

 (b) By the formula of part (a), the function $\psi$ is strictly decreasing in the interval $(-c, \infty)$ and maps that interval to $(0, 1)$. Therefore, there is a unique $u=u(\alpha, \theta)>-c$ such that $\psi(u)=\alpha$. Let $\lambda:=u/c$, which is greater than $-1$.  Then,  using \eqref{useful}, we see that
 $$\psi(u)=\alpha \Leftrightarrow F_{\infty;\lambda}(c)=1-\alpha \Leftrightarrow c=F_{\infty;\lambda}^{-1}(1-\alpha)=\frac{-\log \alpha}{(1+\lambda)\Big(1-\alpha^\frac{1}{1+\lambda}\Big)}.
 $$
We substitute $c=1+\theta, \lambda=u/(1+\theta)$, and the above equivalences show that $u$ is the unique solution of
 \begin{equation*}
 \left(1+\frac{u}{1+\theta}\right)\left(1-\alpha^{\frac{1+\theta}{1+\theta+u}}\right)=\frac{-\log \alpha}{1+\theta}.
 \end{equation*}

 \end{proof}

 The exact values of $u$ in \eqref{u} are in perfect agreement with
 the numerical
 approximations given in the last line of Table 1 in \cite{SSK}.
 Notice that the initial capital $u$ can be negative sometimes, e.g.,
 $u(.5,.5)\simeq -.3107$.

\end{document}